\newtheorem{theorem}{Theorem}[section]
\newtheorem{corollary}[theorem]{Corollary}
\newtheorem{lemma}[theorem]{Lemma}
\newtheorem{prop}[theorem]{Proposition}
\theoremstyle{definition}
\newtheorem{definition}[theorem]{Definition}
\newtheorem{example}[theorem]{Example}
\newtheorem{remark}[theorem]{Remark}
\newcommand{\N}{\mathbb{N}}
\newcommand{\Z}{\mathbb{Z}}
\newcommand{\C}{\mathbb{C}}
\newcommand{\p}{\mathbb{P}}
\DeclareMathAlphabet{\pazocal}{OMS}{zplm}{m}{n}
\newcommand{\A}{{\pazocal{A}}}
\newcommand{\T}{{\mathcal{T}}}
\newcommand{\cA}{\mathcal{A}}
\newcommand{\cB}{\mathcal{B}}
\newcommand{\cE}{\mathcal{E}}
\newcommand{\cT}{\mathcal{T}}
\newcommand{\cI}{\mathcal{I}}
\newcommand{\cO}{\mathcal{O}}
\newcommand{\TA}{\cT_{\cA}}
\def\dot{\mathchar"013A}
\newcommand{\hdot}{{\raise1pt\hbox to0.35em{\Huge $\dot$}}}
\begin{document}
\date{August, 2023}

\title[A geometric perspective on plus-one generated arrangements of lines]%
{A geometric perspective on plus-one generated arrangements of lines}

\author[A. M\u acinic]{Anca~M\u acinic$^*$}
\address{Simion Stoilow Institute of Mathematics, 
 Bucharest, Romania}
\email{Anca.Macinic@imar.ro}
\thanks{$^*$ Partially supported by 
 a grant of the Romanian Ministry of Education and Research, CNCS - UEFISCDI, project number PN-III-P4-ID-PCE-2020-2798, within PNCDI III}

\author[J. Vall\`es]{Jean Vall\`es$^{**}$}
\thanks{$^{**}$ Partially supported by 
 Bridges ANR-21-CE40-0017}
\address{Université de Pau et des Pays de l'Adour, France}
\email{jean.valles@univ-pau.fr}


\keywords{projective lines arrangement; plus-one generated arrangement, vector bundle, splitting type}

\begin{abstract} 
We give a geometric characterisation of plus-one generated projective line arrangements that are next-to-free. We present new succinct proofs, via associated line bundles,  for some properties of  plus-one generated projective line arrangements.
\end{abstract}
 
\maketitle

\section{Introduction} 
\label{sec:introduction}

Let $\cA$ be an arrangement of $n$ lines in $\p^2 = \p^2(\C)$, defined as the zero locus of a homogeneous degree $n$ polynomial $f_{\cA}:= \Pi_{H \in \cA} \alpha_H$, where $\alpha_H \in \C[x,y,z]$ is the linear form that defines the line $H$.

Let $\cI_{\cA}$ be the Jacobian ideal of $f_{\cA}$, i.e. the image of the Jacobian map

\begin{equation}
\label{eq: jacobian}
 \cO_{\p^2}^3 \overset{\nabla f_{\cA} } \longrightarrow   \cO_{\p^2}(n-1),
\end{equation}

 where $\nabla f_{\cA} = [\frac{\partial f_{\cA}}{\partial x} \; \frac{\partial  f_{\cA}}{\partial y} \; \frac{\partial  f_{\cA}}{\partial z}]$,  the matrix with entries the partial derivatives of $ f_{\cA}$ with respect to $x, y, z$.
 
 The kernel $\TA$ of $\nabla f_{\cA}$, defined by the short exact sequence:

 $$\begin{CD}
     0 @>>>  \TA @>>>   \cO_{\p^2}^3  @>>>  \cI_{\cA}  @>>>0
    \end{CD}$$ 
 
  \noindent is a rank $2$ reflexive sheaf, hence a vector bundle over $\p^2$, to which we will refer to as {\it the vector bundle associated to $\cA$}. 
  It is isomorphic to the sheafification of the graded module of Jacobian syzygyes of $f_{\A}$.
  
\begin{definition}
\label{def:free}
The arrangement $\cA$ is called {\it free} with exponents $(a,b), \; a,b \in \N$ if  
$$\TA = \cO_{\p^2}(-a)\oplus \cO_{\p^2}(-b)$$
\end{definition}

 To an arrangement of hyperplanes in general one can naturally associate the poset of intersections of various subsets of its set of hyperplanes, ordered by reversed inclusion, which proves to be in fact a geometric lattice (see \cite{OT}) for details), called {\it the intersection lattice of the arrangement}. Terao conjectured in \cite{OT} that, if an arrangement is free, then all the other arrangements in the realisation space of its intersection lattice are free. 
  
  Motivated by the long standing Terao conjecture,  the study of free arrangements is a very active area of research, and, in connection to that, a series of freeness-like notions emerged in the recent literature. We will refer in this note to Abe's recently introduced notion of plus-one generated arrangements from \cite{A0}.  They appear in subsequent papers on generalized deletion-addition (\cite{A2, ADen}), respectively deletion-restriction problems (\cite{A1}).\\
 
  Since we will only work with arrangements of lines in $\p^2$,  we will call them simply arrangements, considering the context implicit.
  
  \begin{definition}
  \label{def:pog}
The arrangement $\cA$  is called {\it plus-one generated (POG)} of exponents $(a,b)$ and level $d$ if its associated vector bundle $\TA$ has a resolution of type 

\begin{equation}
\label{eq:res}
  \begin{CD}
     0 @>>>  \cO_{\p^2}(-1-d)@>>> \cO_{\p^2}(-d)\oplus \cO_{\p^2}(-b)\oplus \cO_{\p^2}(-a) @>>>  \TA @>>>0
    \end{CD}
 \end{equation}
  \end{definition}
  
  
  We consider here the exponents of a POG arrangement to be ordered, i.e. $a \leq b$.
Notice that, if if $d=b$,  then \ref{def:pog} restricts to the definition of  the nearly free arrangements introduced by Dimca-Sticlaru in \cite{DS}. Also, we have that $c_1(\TA)=1-a-b$, where $c_1(\TA)$ is the first Chern class of $\TA$.\\

Our interest in plus-one generated arrangements is justified by their occurrence in the vicinity of free arrangements, in the following sense.

\begin{theorem} \cite{A0}
\label{thm:Abe_NTF} Let $\A$ be a free arrangement. Then:
\begin{enumerate}
\item For any $H \in \cA, \; \cA \setminus \{ H \}$ is either free or plus-one generated.
\item For any $H \in \p^2, \; \cA \cup \{ H \}$ is either free or plus-one generated.
\end{enumerate}
\end{theorem}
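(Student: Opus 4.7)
For part (1), fix $\A$ free with $\TA\cong\cO_{\p^2}(-a)\oplus\cO_{\p^2}(-b)$, a line $H\in\A$ with defining form $\alpha_H$, and set $\A'=\A\setminus\{H\}$; the goal is to derive a resolution of $\TA'$ from the splitting of $\TA$ and to show it matches either the free or POG shape. The comparison rests on the factorization $f_\A=\alpha_H f_{\A'}$, which gives the Leibniz identity $\nabla f_\A=\alpha_H\nabla f_{\A'}+f_{\A'}\nabla\alpha_H$. Contracting a homogeneous triple $(u,v,w)$ against the constant vector $\nabla\alpha_H$ and using $\gcd(\alpha_H,f_{\A'})=1$, one sees that $(u,v,w)\in\TA$ lies in $\TA'$ precisely when $\nabla\alpha_H\cdot(u,v,w)^\top=0$, while $(u,v,w)\in\TA'$ lies in $\TA$ precisely when $\alpha_H\mid\nabla\alpha_H\cdot(u,v,w)^\top$.

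\textbf{Exact sequences.} These observations identify the common subsheaf $\TA\cap\TA'\subset\cO_{\p^2}^3$ as the kernel of the constant-coefficient morphism $\nabla\alpha_H\cdot:\cO_{\p^2}^3\to\cO_{\p^2}$ restricted to either $\TA$ or $\TA'$. Tracking the appropriate twists (the divisibility by $\alpha_H$ forces a shift of $-1$ on the $\TA$-side) produces two parallel short exact sequences
$$
0\to \TA\cap\TA' \to \TA \to \cL(-1) \to 0,\qquad 0 \to \TA\cap\TA' \to \TA' \to \cL \to 0,
$$
where $\cL\subseteq\cO_{\p^2}$ is an ideal sheaf whose colength encodes the incidence of $H$ with the singular locus of $\A$; a Chern-class check reconciles this with $c_1(\TA')-c_1(\TA)=1$.

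\textbf{Resolution and dichotomy.} Splicing the splitting $\TA\cong\cO_{\p^2}(-a)\oplus\cO_{\p^2}(-b)$ with a Koszul-type resolution of $\cL$ into the first exact sequence yields a free resolution of $\TA\cap\TA'$. Feeding this into the second exact sequence via the horseshoe construction produces a length-one free resolution of $\TA'$ of the form
$$
0\to\cO_{\p^2}(-1-d)\to\cO_{\p^2}(-d)\oplus\cO_{\p^2}(-a)\oplus\cO_{\p^2}(-b)\to\TA'\to 0,
$$
for a suitable integer $d$. The conclusion follows from a minimality check: if the leftmost map has a scalar entry, the redundant pair cancels and the resolution collapses to a splitting $\TA'\cong\cO_{\p^2}(-a')\oplus\cO_{\p^2}(-b')$, so $\A'$ is free; otherwise the resolution is minimal, matching the shape of Definition~\ref{def:pog}, and $\A'$ is POG. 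Part (2) is entirely symmetric: for $H\notin\A$ and $\A''=\A\cup\{H\}$, swap the roles of $f_\A$ and $f_{\A''}$ in the Leibniz identity to obtain the analogous pair of exact sequences with the twist pattern reversed, then run the same horseshoe-plus-minimality argument to conclude. The main technical obstacle is the precise identification of $\cL$ and the integer $d$ in terms of the combinatorics of $H\cap\mathrm{Sing}(\A)$ and the splitting type of $\TA|_H$; everything else is a formal consequence of the splitting of $\TA$ and the structure of rank-two bundles on $\p^2$.
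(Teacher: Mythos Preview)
The paper does not prove this theorem; it is quoted from \cite{A0} without argument, so there is no ``paper's own proof'' to compare against. That said, the paper's later Propositions~\ref{prop:a+1}--\ref{prop:d+1} use exactly the ingredients your sketch reaches for (the deletion sequence \eqref{AtoA-l} and diagram chases), so a vector-bundle proof along your lines is certainly in the spirit of the paper.

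Your specific setup, however, is internally inconsistent. You characterize membership in $\TA\cap\TA'$ two ways: for $\theta\in\TA$ you require $\nabla\alpha_H\cdot\theta=0$, while for $\theta\in\TA'$ you require only $\alpha_H\mid\nabla\alpha_H\cdot\theta$. The first condition is the correct one for the kernels $D_0$ of the Jacobian maps (which is how the paper defines $\TA$): if $\theta(f_{\A'})=0$ then $\theta(f_\A)=\theta(\alpha_H)\,f_{\A'}$, which vanishes iff $\theta(\alpha_H)=0$, not merely iff $\alpha_H\mid\theta(\alpha_H)$. Your second condition is instead the one relevant to the full logarithmic modules $D(\A)\subset D(\A')$. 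Because of this mix-up, the pair of sequences with the mysterious ideal sheaf $\cL\subset\cO_{\p^2}$ is not correct as stated; the genuine cokernel of $\TA\hookrightarrow\TA'$ is the line bundle $\cO_H(-t)$ on $H$ (this is precisely \eqref{AtoA-l}), not an ideal sheaf on $\p^2$. Once you use \eqref{AtoA-l} directly, a single horseshoe step with the Koszul resolution $0\to\cO(-t-1)\to\cO(-t)\to\cO_H(-t)\to 0$ already yields
\[
0\to\cO_{\p^2}(-t-1)\to\cO_{\p^2}(-a)\oplus\cO_{\p^2}(-b)\oplus\cO_{\p^2}(-t)\to\TA'\to 0,
\]
and the remaining work is the minimality/ordering analysis you allude to: one must still check that when this resolution is minimal it actually has the POG shape (i.e.\ that the syzygy degree $t+1$ exceeds the two smaller generator degrees), which requires a short case argument rather than the blanket ``otherwise it is POG'' you give. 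Part~(2) is not quite symmetric either: there $\T_{\A\cup\{H\}}$ is a \emph{kernel}, so the relevant tool is a mapping cone rather than a horseshoe.
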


\begin{definition}
\label{def:NTF}
An arrangement is called {\it next to free (NT-free)} if it can be obtained either by deletion  of a line from a free arrangement or by addition of a line to a free arrangement. In the first situation we call the arrangement {\it next to free minus (NT-free minus)}, whereas in the second  situation we call the arrangement {\it next to free plus (NT-free plus)}.
\end{definition}

 Theorem \ref{thm:Abe_NTF} states in particular that NT-free arrangements are either free or plus-one generated. One could naturally ask when is a plus-one generated arrangement also an NT-free one. In \cite[Theorem 1.11]{A0} the author implicitly gives conditions 
for a plus-one generated arrangement to be NT-free, in terms of exponents and combinatorics.
We will give in Theorem \ref{thm:NTF-POG} a geometric characterisation of the situation when a plus-one generated arrangement is NT-free.\\

In section \ref{sec:num_prop} we prove some deletion results for plus-one generated arrangements, and we
revisit and present new simplified geometric proofs, using vector bundles, for a number of  results from \cite{AIM}, see Theorem \ref{thm:AIM4.4} and Proposition \ref{prop:h_val}.

\section{Deletion for plus-one generated line arrangements}
\label{sec:num_prop}

Let $\cA$ be a plus-one generated arrangement of exponents $(a,b)$ and level $d$.
From Definition \ref{def:pog} if follows that $d\ge b$. \\

The resolution \eqref{eq:res} induces a non-zero section:
\begin{equation} \label{min-section}
 \begin{CD}
     0 @>>>  \cO_{\p^2}(-a)@>>> \TA @>>>  I_Z(1-b) @>>>0
    \end{CD}
\end{equation}
where $Z$ is a finite scheme of length $d+1-b$ defined by a complete intersection of a line $l_0 = l_0^{\cA}$ and a degree $d+1-b$ curve:
  $$\begin{CD}
     0 @>>>  \cO_{\p^2}(-1-d)@>>> \cO_{\p^2}(-d)\oplus \cO_{\p^2}(-b) @>>>  I_Z(1-b) @>>>0.
    \end{CD}$$
    \\
    This line $l_0^{\cA}$ will play an important role in formulating necessary and sufficient conditions for a plus-one generated arrangement to be NT-free, see Theorem \ref{thm:NTF-POG}.\\
    
To state the next theorem, we need to recall a classic result on vector bundles.
Given a rank $2$ vector bundle $\cE$ over $\p^2$ and an arbitrary line $l \in \p^2$, the restriction of $\cE$ to $l$ splits as a sum of two line bundles, by Grothendieck's splitting theorem:
$$
\cE|_{l} := \cE \otimes \cO_l = \cO_l(\alpha)\oplus \cO_l(\beta)
$$
where the pair $(\alpha, \beta) \in \Z^2$ is called {\it the splitting type} of $\cE$ on $l$.

\begin{theorem}
\label{thm:AIM4.4}
Let $\cA$ be a plus-one generated arrangement of exponents $(a,b)$ and level $d$ and $l \in \p^2$ arbitrary.
 \begin{enumerate}
   \item If  $l\cap Z=\emptyset$ then $\TA\otimes \cO_l=\cO_l(-a)\oplus \cO_l(1-b)$.
   \item If  $|l\cap Z|=1$ then $\TA\otimes \cO_l=\cO_l(1-a)\oplus \cO_l(-b)$.
   \item If $l=l_0^{\cA}$,  i.e. $|l\cap Z|=d+1-b$, then $\TA\otimes \cO_l=\cO_l(-d)\oplus \cO_l(d+1-a-b)$.
    \end{enumerate}
\end{theorem}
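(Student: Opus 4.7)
The plan is to restrict the section sequence \eqref{min-section} to $l$ and analyse the resulting extension of line bundles on $l\simeq\p^1$. First I would observe that the map $\cO_{\p^2}(-a)\hookrightarrow\TA$ in \eqref{min-section} corresponds to a global section $s\in H^0(\TA(a))$ whose scheme of zeros is exactly $Z$; its restriction $s|_l$ then has zero scheme equal to the scheme-theoretic intersection $l\cap Z$. I will write $m$ for the length of $l\cap Z$, so that the three cases of the theorem correspond to $m=0,1,d+1-b$ respectively.

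By Grothendieck's theorem, $\TA|_l\simeq\cO_l(\alpha)\oplus\cO_l(\beta)$ with $\alpha+\beta=c_1(\TA)=1-a-b$. The restriction $s|_l$ is then a pair of sections on $\p^1$ with common zero divisor of degree $m$. Factoring out this common divisor (a gcd step on $\p^1$) produces a nowhere-vanishing pair, hence a subbundle inclusion $\cO_l(-a+m)\hookrightarrow\TA|_l$; the $c_1$ computation $(1-a-b)-(-a+m)=1-b-m$ identifies the quotient, giving the short exact sequence
\begin{equation*}
 0\to \cO_l(-a+m)\to \TA|_l\to \cO_l(1-b-m)\to 0.
\end{equation*}

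To conclude I need to show this sequence splits. The obstruction lives in $H^1(\p^1,\cO_l(b-a-1+2m))$, and since $b\ge a$ and $m\ge 0$ the twist satisfies $b-a-1+2m\ge -1$, so the cohomology vanishes. Hence $\TA|_l\simeq\cO_l(-a+m)\oplus\cO_l(1-b-m)$, and substituting $m=0,1,d+1-b$ recovers cases (i)--(iii).

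I expect the only subtle step to be the identification of the length of the zero scheme of $s|_l$ with $m$ as claimed in the three cases. Since $Z$ is supported on $l_0$ (being a complete intersection of $l_0$ with a degree-$(d+1-b)$ curve), any line $l\ne l_0$ meets $Z$ only at the point $l\cap l_0$, contributing length $0$ or $1$; for $l=l_0$ the intersection equals $Z$ itself, of length $d+1-b$. These account precisely for the three cases in the statement.
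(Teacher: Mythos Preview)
Your proof is correct and follows essentially the same route as the paper: both restrict the section sequence \eqref{min-section} to $l$ and read off the splitting type from the resulting extension on $\p^1$. The paper states the restricted sequence with $I_Z(1-b)\otimes\cO_l\simeq\cO_l(1-b-m)\oplus\cO_{l\cap Z}$ and leaves the passage to the splitting type implicit, whereas you make the saturation step and the $\mathrm{Ext}^1$-vanishing explicit; but the underlying argument is the same.
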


\begin{proof} Tensoring the exact sequence (\ref{min-section}) by $\cO_l$, where $l\subset \p^2$ is a line, we get
\begin{equation} \label{min-section-restricted}
 \begin{CD}
   0 @>>>  \cO_{l}(-a)@>>> \TA\otimes \cO_l @>>>  \cO_l(1-b-|l\cap Z|)\oplus \cO_{l\cap Z} @>>>0.
    \end{CD}
\end{equation}

  There are three different cases for a line $l$ meeting $Z$: $l$ does not meet $Z$, then $|l\cap Z|=0$; $l$ cuts transversally $l$ and meets $Z$, then $|l\cap Z|=1$; or $l=l_0^{\cA}$, then $l\cap Z=Z$ and $|l\cap Z|=d+1-b$. These three cases
  give the three different splitting types of $\TA$ along  $l$.
\end{proof}

\begin{corollary}
\label{cor:l_0}
Let $\cA$ be a plus-one generated arrangement of exponents $(a,b)$ and level $d>b$. Then there exists a unique line $l^{\cA}_0 \subset \p^2$ such that the splitting type of $\TA$ on  $l^{\cA}_0$ is $(a+b-d-1,d)$.
\end{corollary}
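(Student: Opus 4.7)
My plan is to split the statement into existence and uniqueness. For existence I would simply invoke Theorem \ref{thm:AIM4.4}(3): the line $l_0^{\cA}$ already singled out by the minimal section (\ref{min-section}) is exhibited by that case as one on which $\TA \otimes \cO_{l_0^{\cA}} = \cO_{l_0^{\cA}}(-d) \oplus \cO_{l_0^{\cA}}(d+1-a-b)$, matching the exponents prescribed by the corollary (up to the usual sign convention aligning twists with exponents). So the substantive content of the statement lies in the uniqueness assertion.

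For uniqueness my plan is to reuse Theorem \ref{thm:AIM4.4} itself as a complete trichotomy of splitting behaviours along lines. Any line $l \subset \p^2$ satisfies $|l \cap Z| \in \{0, 1, d+1-b\}$ (the last possibility occurring precisely when $l = l_0^{\cA}$), and the three cases of that theorem then pin $\TA \otimes \cO_l$ down to three specific splitting types. It therefore suffices to check that the splitting type appearing in case (3) never coincides numerically with either of those in cases (1) and (2) under the hypothesis $d > b$.

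The only real obstacle is this short numerical verification, and ensuring it actually uses the strict inequality $d > b$. The observation is that $a \leq b < d$ forces both $-d < -b \leq -a$ and $-d < 1-b$, so the exponent $-d$ cannot appear in either of the unordered pairs $\{-a, 1-b\}$ or $\{1-a, -b\}$; hence the splitting type of case (3) is strictly distinguished from those of cases (1) and (2), and any line realising it must equal $l_0^{\cA}$. As a sanity check I would note that the hypothesis $d > b$ is tight: at $d = b$ the arrangement is nearly free and the splitting types of cases (2) and (3) collapse onto each other, so uniqueness genuinely fails there --- which is consistent with the corollary requiring $d > b$.
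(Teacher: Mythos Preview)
Your proposal is correct and matches the paper's own (implicit) reasoning: the corollary is stated immediately after Theorem~\ref{thm:AIM4.4} with no separate proof, so the intended argument is precisely the one you spell out --- existence from case~(3), and uniqueness from the exhaustiveness of the trichotomy together with the observation that $d>b\ge a$ forces $d\notin\{a,b-1\}$ and $d\notin\{a-1,b\}$. Your remark that the trichotomy is genuinely exhaustive (because $Z\subset l_0^{\cA}$, so any other line meets $Z$ in length at most one) and your sanity check at $d=b$ are both accurate and worth making explicit.
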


\begin{remark}
\label{rem:ineq_d}
\begin{enumerate}
\item Theorem \ref{thm:AIM4.4} retrieves \cite[Theorem 4.4]{AIM} and extends similar results for nearly free arrangements from \cite{AD,MV}.
\item Considering the last splitting type one can also deduce that $d+1\le a+b$. Indeed since $\TA$ is the kernel of the Jacobian map \eqref{eq: jacobian},  which restricted to $l$ remains exact, then $\TA\otimes \cO_l$ cannot have a strictly positive component.
\end{enumerate}
\end{remark}

\medskip

Let $l$ be a line in $\cA$ and denote by  $\cA \setminus l$ the arrangement obtained from $\cA$ by removing $l$. We first recall the well known relation:
\begin{lemma}
Let 
$h:=|l\cap \cA|$ be the number of distinct intersection points  and $t$ be the number of triple points of $\cA$ on $l$ counted with multiplicity. Then $t=|\cA|-h-1$.
\end{lemma}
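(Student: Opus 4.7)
The plan is a one-line double counting. Each of the $|\cA|-1$ lines of $\cA\setminus\{l\}$ meets $l$ in a single point, so if for every $p\in l\cap\cA$ we set $m_p:=\mult_p(\cA)-1$ (the number of lines of $\cA\setminus\{l\}$ through $p$), then
$$\sum_{p\in l\cap \cA} m_p \;=\; |\cA|-1.$$
Since there are exactly $h$ summands, subtracting $h=\sum_{p\in l\cap\cA}1$ yields
$$\sum_{p\in l\cap\cA}(m_p-1)\;=\;|\cA|-h-1.$$

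The second step is to identify this last sum with $t$. A double point $p\in l$ (i.e.\ $\mult_p(\cA)=2$) contributes $m_p-1=0$, while a point of multiplicity $k\ge 3$ contributes $k-2$. Thus the left-hand side is precisely the count of triple (and higher) points of $\cA$ on $l$ with each $k$-fold point weighted by $k-2$, which is the meaning of ``counted with multiplicity'' consistent with the stated formula. Hence $t=|\cA|-h-1$.

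The proof is essentially a formal manipulation; the only conceptual point worth flagging is the convention for ``counted with multiplicity,'' which is forced by the desired identity and is the standard one used throughout the arrangements literature in relations of this shape.
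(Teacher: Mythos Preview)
Your proof is correct. The paper itself states this lemma without proof, introducing it as a ``well known relation,'' so there is no argument to compare against; your double-counting is the standard justification. Your identification of the multiplicity convention (a $k$-fold point on $l$ contributing $k-2$) is exactly the one the paper uses implicitly in the exact sequence \eqref{AtoA-l}, where $\cO_l(-t)=\cO_l(h+1-a-b)$ forces $t=a+b-h-1=|\cA|-h-1$.
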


We have also two canonical exact sequences according to the data $l,\cA,\cA\setminus l$ and $t$ the number of triple points of $\cA$ on $l$:
\begin{equation}\label{AtoA-l}
\begin{CD}
     0 @>>>  \TA@>>> \cT_{\cA \setminus l} @>>>  \cO_l(-t)=\cO_l(h+1-a-b) @>>>0
    \end{CD}
\end{equation}

and after dualizing this exact sequence we get 
$$ \begin{CD}
     0 @>>> \cT_{\cA \setminus l}^{\vee}  @>>> \TA^{\vee} @>>>  \cO_l(a+b-h) @>>>0.
    \end{CD}$$
Since $ \cT_{\cA \setminus l}^{\vee}= \cT_{\cA \setminus l}(a+b-2)$ and $\TA^{\vee}=\TA(a+b-1)$ we obtain after shifting by $1-a-b$:
\begin{equation}\label{A-ltoA}
\begin{CD}
     0 @>>> \cT_{\cA \setminus l}(-1)  @>>> \TA @>>>  \cO_l(1-h) @>>>0.
    \end{CD}
\end{equation}
Now these exact sequences force $h$ to be one of the following numbers, giving a short geometric argument for \cite[Proposition 4.7]{AIM}:

\begin{prop}
\label{prop:h_val}
The allowed values for $h$ are: \begin{enumerate}
\item $h<a$;
\item $h=a$;
\item $h=a+1$;
\item $h=b$;
\item $h=b+1$;
\item $h=d+1$.
\end{enumerate}
\end{prop}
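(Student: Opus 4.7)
The plan is to restrict the surjection $\TA \twoheadrightarrow \cO_l(1-h)$ furnished by the exact sequence \eqref{A-ltoA} to the line $l$ itself, and then read off constraints on $h$ using Grothendieck's splitting theorem together with the classification of $\TA|_l$ provided by Theorem \ref{thm:AIM4.4}. Since $\cO_l(1-h)$ is already supported on $l$, tensoring \eqref{A-ltoA} with $\cO_l$ preserves surjectivity of the rightmost map, producing an honest surjection $\TA|_l \twoheadrightarrow \cO_l(1-h)$ of coherent sheaves on $l \simeq \p^1$.

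The key elementary input is the following observation on $\p^1$: given $\alpha \le \beta$, a surjective morphism $\cO(\alpha)\oplus \cO(\beta) \twoheadrightarrow \cO(\gamma)$ exists if and only if $\gamma = \alpha$ or $\gamma \ge \beta$. Indeed, such a map is given by a pair of sections $(s_\alpha, s_\beta) \in H^0(\cO(\gamma-\alpha)) \oplus H^0(\cO(\gamma-\beta))$ without common zeros; in the strict range $\alpha < \gamma < \beta$ the second entry is forced to vanish while the first has unavoidable zeros on $\p^1$, ruling out surjectivity, whereas all remaining cases are easily realized.

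I would then apply this with $\gamma = 1-h$, using Theorem \ref{thm:AIM4.4} to enumerate the three possible splitting types $(\alpha,\beta)$ of $\TA|_l$ for $l \in \cA$. Keeping track of the ordering $a \le b$ together with $d \ge b$ and $d+1 \le a+b$ from Remark \ref{rem:ineq_d}(2), a case-by-case analysis gives: in the case $l \cap Z = \emptyset$ one obtains $h \le a+1$ or $h = b$; in the case $|l \cap Z| = 1$ one obtains $h \le a$ or $h = b+1$; and in the case $l = l_0^{\cA}$ one obtains $h \le a+b-d$ or $h = d+1$. Since $d \ge b$ forces $a+b-d \le a$, the first alternative of the third case is already covered by the first two, so the union of admissible values is precisely the list (1)--(6).

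The main bookkeeping point is correctly identifying which of $-a$ and $1-b$ is the smaller exponent in the first case, as this depends on whether $b-a \ge 2$, $b = a+1$, or $b = a$; all three sub-cases must be checked separately, but they all yield the same conclusion ``$h \le a+1$ or $h = b$''. Once the elementary $\p^1$ lemma is in place the rest is routine, and no other serious obstacle is expected.
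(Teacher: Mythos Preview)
Your proposal is correct and follows essentially the same route as the paper: restrict the surjection $\TA \twoheadrightarrow \cO_l(1-h)$ from \eqref{A-ltoA} to $l$, invoke the three splitting types of Theorem~\ref{thm:AIM4.4}, and read off the admissible $h$ in each case. Your explicit $\p^1$ lemma and the careful bookkeeping on the ordering of $-a$ versus $1-b$ are more detailed than the paper's terse ``other values would not give a surjection'', but the underlying argument is identical.
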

 \begin{proof}
The surjective map $$\begin{CD}
      \TA @>>>  \cO_l(1-h) 
    \end{CD}$$ induces a surjective map: 
    $$\begin{CD}
      \TA\otimes \cO_l @>>>  \cO_l(1-h).
    \end{CD}$$
    When $\TA\otimes \cO_l=\cO_l(-a)\oplus \cO_l(1-b)$ the allowed values for $h$ are $h\le a$, 
    $h=a+1$ or $h=b$. Other values would not give a surjection. 
    
\smallskip

When $\TA\otimes \cO_l=\cO_l(1-a)\oplus \cO_l(-b)$ the allowed values for $h$ are $h< a$, 
    $h=a$ or $h=b+1$. Other values would not give a surjection. 
    
\smallskip

When $\TA\otimes \cO_l=\cO_l(-d)\oplus \cO_l(d+1-a-b)$ the allowed values for $h$ are $h=d+1$, 
     or $h=a+b-d$ or $h\le min(d,a+b-d-1)$. Other values would not give a surjection. Since $d \geq b \geq a$, we get in the last two cases $h \leq a$, respectively $h<a$. 
 \end{proof}
 
 For lines $l \in \cA$ exhibiting some of the values of $h$ from Proposition \ref{prop:h_val}, one can precisely describe the arrangement obtained by deletion of the line $l$ from $\cA$. 
 
\begin{prop} 
\label{prop:a+1}
Let $\cA$ be a plus-one generated of type $(a,b)$ and level $d, \; a < d$.
Let $l\in \cA$ and $h=|l\cap \cA|=a+1$. Then 
$\cA\setminus l$ is plus-one generated of type $(a,b-1)$ and level $(d-1)$.
\end{prop}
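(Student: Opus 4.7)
The plan is to combine the minimal section (\ref{min-section}) with the deletion sequence (\ref{A-ltoA}) via the snake lemma, and then splice the result with the Koszul resolution of the finite complete intersection $Z$.

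\emph{Step 1: identify the splitting type on $l$.} Since $h=a+1$, the case analysis in the proof of Proposition \ref{prop:h_val} combined with the hypothesis $a<d$ (which excludes the possibility $l=l_0^\cA$, as that would force $h=d+1$) pins down the splitting type of $\TA$ on $l$ to be $\cO_l(-a)\oplus \cO_l(1-b)$. By Theorem \ref{thm:AIM4.4}(1), this means $l\cap Z=\emptyset$.

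\emph{Step 2: snake lemma.} Write $\phi\colon \cO_{\p^2}(-a)\hookrightarrow \TA$ for the minimal section in (\ref{min-section}) and $\delta\colon \TA \twoheadrightarrow \cO_l(1-h)=\cO_l(-a)$ for the surjection in (\ref{A-ltoA}). The restriction $\phi|_l$ realises $\cO_l(-a)$ as the first summand of $\TA|_l = \cO_l(-a)\oplus \cO_l(1-b)$, while $\delta$ is the projection onto this same summand; hence $\delta\circ\phi$ coincides, up to a nonzero scalar, with the canonical restriction $\cO_{\p^2}(-a)\twoheadrightarrow \cO_l(-a)$, whose kernel is $\cO_{\p^2}(-a-1)$. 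Applying the snake lemma to
$$
\begin{CD}
0 @>>> \cO_{\p^2}(-a-1) @>>> \cO_{\p^2}(-a) @>>> \cO_l(-a) @>>> 0 \\
@. @VVV @VV\phi V @| @. \\
0 @>>> \cT_{\cA\setminus l}(-1) @>>> \TA @>>> \cO_l(-a) @>>> 0
\end{CD}
$$
and twisting by $1$ yields
$$0 \to \cO_{\p^2}(-a) \to \cT_{\cA\setminus l} \to I_Z(2-b) \to 0.$$

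\emph{Step 3: splice with the Koszul resolution.} Since $Z$ is the complete intersection of $l_0^\cA$ with a curve of degree $d+1-b$, its Koszul resolution twisted by $2-b$ reads
$$0 \to \cO_{\p^2}(-d) \to \cO_{\p^2}(1-d) \oplus \cO_{\p^2}(1-b) \to I_Z(2-b) \to 0.$$
Applying the horseshoe lemma to this sequence and the one produced in Step 2 delivers
$$0 \to \cO_{\p^2}(-d) \to \cO_{\p^2}(1-d) \oplus \cO_{\p^2}(1-b) \oplus \cO_{\p^2}(-a) \to \cT_{\cA\setminus l} \to 0,$$
which is exactly a resolution of the form (\ref{eq:res}) with exponents $(a,b-1)$ and level $d-1$.

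The main delicate point is Step 2: one must verify that $\delta\circ\phi$ is surjective and coincides (up to a nonzero scalar) with the canonical restriction to $l$. This is precisely where the condition $l\cap Z=\emptyset$ is essential---without it, $\phi|_l$ would not land cleanly in the $\cO_l(-a)$-summand of $\TA|_l$, the snake lemma would produce a spurious torsion contribution, and the resulting resolution would not have the plus-one generated shape.
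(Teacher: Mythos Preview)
Your overall strategy is sound and runs parallel to the paper's: the paper feeds the full POG resolution $0\to\cO(-d-1)\to\cO(-a)\oplus\cO(-b)\oplus\cO(-d)\to\TA\to 0$ into the snake lemma against the Koszul sequence $0\to\cO(-a-1)\to\cO(-a)\to\cO_l(-a)\to 0$, obtaining a presentation of $\cT_{\cA\setminus l}(-1)$ with cokernel supported on a curve $\Gamma$ of degree $d-a$, and then reassembles this into the POG resolution. You instead isolate the minimal section $\phi$ first, run the snake lemma to get $0\to\cO(-a)\to\cT_{\cA\setminus l}\to I_Z(2-b)\to 0$ directly, and only then splice with the Koszul resolution of $I_Z$. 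Both routes land on the same final resolution; yours is arguably cleaner because the scheme $Z$ (which is already known to be a complete intersection on $l_0^{\cA}$) replaces the auxiliary curve $\Gamma$.

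There is, however, a genuine gap in Step~1 when $a=b$. Theorem~\ref{thm:AIM4.4} gives implications, not equivalences, and when $a=b$ the splitting types in cases (1) and (2) coincide: both are $\cO_l(-a)\oplus\cO_l(1-a)$. So from $h=a+1$ and $a<d$ you can rule out $l=l_0^{\cA}$, but you \emph{cannot} conclude $l\cap Z=\emptyset$; the case $|l\cap Z|=1$ is compatible with $h=b+1=a+1$ in the analysis of Proposition~\ref{prop:h_val}. And in that case your own final paragraph predicts correctly what goes wrong: restricting the minimal section to $l$ one finds $\phi|_l$ lands in the $\cO_l(1-a)$ summand (since the cokernel $I_Z(1-a)|_l\cong\cO_l(-a)\oplus\cO_p$ forces the $\cO_l(-a)$--component of $\phi|_l$ to vanish), while $\delta|_l$ kills that summand, so $\delta\circ\phi=0$ and your snake-lemma diagram collapses.

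The fix is easy and worth stating. When $a=b$ the space of degree-$a$ sections of $\TA$ is (at least) two-dimensional, so the minimal section is not unique; if every such $\phi$ satisfied $\delta\circ\phi=0$, then $\delta$ would factor through the image of the remaining generator $\cO(-d)\to\TA$, hence through a map $\cO_l(-d)\to\cO_l(-a)$, which cannot be surjective since $d>a$. Thus one may always choose $\phi$ with $\delta\circ\phi\neq 0$ (equivalently, with $l\cap Z_\phi=\emptyset$), and your argument then goes through verbatim. The paper's diagram hides the same issue---commutativity of its right-hand square is exactly the condition $\delta\circ\phi\neq 0$ after a suitable choice of basis in $\cO(-a)\oplus\cO(-b)$---but does not flag it.
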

 
\begin{proof}
 We have according to (\ref{AtoA-l}):
 $$\begin{CD}
     0 @>>> \cT_{\cA \setminus l}(-1)  @>>> \TA @>>>  \cO_l(-a) @>>>0
    \end{CD}$$
 and a commutative diagram:
 
   $$\begin{CD}
    @. 0@>>> \cO_{\p^2}(-b)\oplus \cO_{\p^2}(-d) @>>>\cT_{\cA \setminus l}(-1)   \\
   @.@VVV @VVV @VVV \\
     0@>>> \cO_{\p^2}(-d-1)@>>>  \cO_{\p^2}(-b)\oplus \cO_{\p^2}(-d)\oplus \cO_{\p^2}(-a)  @>>> \TA @>>>0\\
      @. @VVV @VVV @VVV \\
    0 @>>> \cO_{\p^2}(-a-1)  @>>> \cO_{\p^2}(-a) @>>>  \cO_l(-a) @>>>0
    \end{CD}$$ 
which implies using the snake lemma:
$$\begin{CD}
     0 @>>> \cO_{\p^2}(-b)\oplus \cO_{\p^2}(-d)   @>>> \cT_{\cA \setminus l}(-1) @>>>  \cO_{\Gamma}(-a-1) @>>>0
    \end{CD}$$
where $\mathrm{deg}(\Gamma)=d-a$. We then deduce

 $$\begin{CD}
    @. 0@>>> \cO_{\p^2}(-d-1) @=\cO_{\p^2}(-d-1)   \\
   @.@VVV @VVV @VVV \\
     0@>>> \cO_{\p^2}(-b)\oplus \cO_{\p^2}(-d)@>>>  \cO_{\p^2}(-b)\oplus \cO_{\p^2}(-d)\oplus \cO_{\p^2}(-a-1)  @>>> \cO_{\p^2}(-a-1) @>>>0\\
      @. @| @VVV @VVV \\
    0 @>>> \cO_{\p^2}(-b)\oplus \cO_{\p^2}(-d)  @>>> \cT_{\cA \setminus l}(-1)  @>>>  \cO_{\Gamma}(-a-1) @>>>0.
    \end{CD}$$ 
 \end{proof}
    
\begin{prop} 
\label{prop:b+1}
Let $\cA$ be a plus-one generated of type $(a,b)$ and level $d>b$.
Let $l\in \cA$ and $h=|l\cap \cA|=b+1$. Then 
$\cA\setminus l$ is plus-one generated of type $(a-1,b)$ and level $(d-1)$.
\end{prop}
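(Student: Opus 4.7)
The strategy closely follows that of Proposition~\ref{prop:a+1}, with the roles of $a$ and $b$ exchanged. By Proposition~\ref{prop:h_val}, $h=b+1$ corresponds (via Theorem~\ref{thm:AIM4.4}) to the splitting type $(1-a,-b)$ of $\TA|_l$. Instantiating \eqref{A-ltoA} with $h = b+1$ yields
$$
0 \to \cT_{\cA\setminus l}(-1) \to \TA \to \cO_l(-b) \to 0.
$$

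I then construct a $3 \times 3$ commutative diagram whose middle row is the POG resolution~\eqref{eq:res} of $\TA$, whose bottom row is the standard restriction sequence
$$
0 \to \cO_{\p^2}(-b-1) \to \cO_{\p^2}(-b) \to \cO_l(-b) \to 0,
$$
whose middle vertical map is the projection onto the $\cO_{\p^2}(-b)$ summand, and whose right vertical map is the surjection above. Since $d > b$, the induced left vertical map $\cO_{\p^2}(-d-1) \to \cO_{\p^2}(-b-1)$ is multiplication by a nonzero polynomial of degree $d-b$, whose vanishing locus is a plane curve $\Gamma$. Applying the snake lemma produces
$$
0 \to \cO_{\p^2}(-a) \oplus \cO_{\p^2}(-d) \to \cT_{\cA\setminus l}(-1) \to \cO_\Gamma(-b-1) \to 0.
$$
A second $3 \times 3$ diagram, entirely analogous to the second one in the proof of Proposition~\ref{prop:a+1}, then identifies a resolution of $\cT_{\cA\setminus l}(-1)$ of the form
$$
0 \to \cO_{\p^2}(-d-1) \to \cO_{\p^2}(-a) \oplus \cO_{\p^2}(-d) \oplus \cO_{\p^2}(-b-1) \to \cT_{\cA\setminus l}(-1) \to 0.
$$
Twisting by $\cO_{\p^2}(1)$ yields precisely the POG resolution of $\cT_{\cA\setminus l}$ of type $(a-1,b)$ and level $d-1$, as desired.

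The main technical point is verifying commutativity of the first diagram, which amounts to showing that the compositions $\cO_{\p^2}(-a) \to \TA \to \cO_l(-b)$ and $\cO_{\p^2}(-d) \to \TA \to \cO_l(-b)$ both vanish, after possibly adjusting the chosen splitting of the middle term of~\eqref{eq:res}. The first composition lies in $H^0(\cO_l(a-b))$, which vanishes when $a<b$ and can be absorbed into the $\cO_{\p^2}(-b)$ inclusion when $a=b$. The second lies in $H^0(\cO_l(d-b))$, which is generally nonzero, but the map $\Hom(\cO_{\p^2}(-d),\cO_{\p^2}(-b)) \to \Hom(\cO_{\p^2}(-d),\cO_l(-b))$ is surjective by the vanishing $H^1(\cO_{\p^2}(d-b-1)) = 0$ on $\p^2$, so the obstruction can be cancelled by modifying the $\cO_{\p^2}(-d)$ inclusion by an appropriate multiple of the $\cO_{\p^2}(-b)$ inclusion.
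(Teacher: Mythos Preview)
Your proof is correct and follows essentially the same approach as the paper: the same instantiation of \eqref{A-ltoA}, the same first $3\times 3$ diagram with the snake lemma producing the sequence involving $\cO_\Gamma(-b-1)$ with $\deg\Gamma=d-b$, and the same second diagram yielding the desired POG resolution of $\cT_{\cA\setminus l}(-1)$. Your final paragraph justifying commutativity of the first diagram (via the vanishing $H^0(\cO_l(a-b))=0$ for $a<b$ and the surjectivity of $H^0(\cO_{\p^2}(d-b))\to H^0(\cO_l(d-b))$) is a welcome addition that the paper leaves implicit.
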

 
 \begin{proof}
 We have according to (\ref{AtoA-l}):
 $$\begin{CD}
     0 @>>> \cT_{\cA \setminus l}(-1)  @>>> \TA @>>>  \cO_l(-b) @>>>0
    \end{CD}$$
 and a commutative diagram:
 
   $$\begin{CD}
    @. 0@>>> \cO_{\p^2}(-a)\oplus \cO_{\p^2}(-d) @>>>\cT_{\cA \setminus l}(-1)   \\
   @.@VVV @VVV @VVV \\
     0@>>> \cO_{\p^2}(-d-1)@>>>  \cO_{\p^2}(-b)\oplus \cO_{\p^2}(-d)\oplus \cO_{\p^2}(-a)  @>>> \TA @>>>0\\
      @. @VVV @VVV @VVV \\
    0 @>>> \cO_{\p^2}(-b-1)  @>>> \cO_{\p^2}(-b) @>>>  \cO_l(-b) @>>>0
    \end{CD}$$ 
which implies using the snake lemma:
$$\begin{CD}
     0 @>>> \cO_{\p^2}(-a)\oplus \cO_{\p^2}(-d)   @>>> \cT_{\cA \setminus l}(-1) @>>>  \cO_{\Delta}(-b-1) @>>>0
    \end{CD}$$
where $\mathrm{deg}(\Delta)=d-b$. We then deduce

 $$\begin{CD}
    @. 0@>>> \cO_{\p^2}(-d-1) @=\cO_{\p^2}(-d-1)   \\
   @.@VVV @VVV @VVV \\
     0@>>> \cO_{\p^2}(-a)\oplus \cO_{\p^2}(-d)@>>>  \cO_{\p^2}(-1-b)\oplus \cO_{\p^2}(-d)\oplus \cO_{\p^2}(-a)  @>>> \cO_{\p^2}(-b-1) @>>>0\\
      @. @| @VVV @VVV \\
    0 @>>> \cO_{\p^2}(-a)\oplus \cO_{\p^2}(-d)  @>>> \cT_{\cA \setminus l}(-1)  @>>>  \cO_{\Gamma}(-b-1) @>>>0.
    \end{CD}$$ 
\end{proof}

\begin{prop}
\label{prop:d+1}
 Let $\cA$ be a plus-one generated of type $(a,b)$ and level $d$.
Let $l\in \cA$ and $h=|l\cap \cA|=d+1$. Then 
$\cA\setminus l$ is free with exponents $(a-1,b-1)$.
\end{prop}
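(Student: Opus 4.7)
My plan is to adapt the snake lemma arguments of Propositions \ref{prop:a+1} and \ref{prop:b+1} to the regime $h=d+1$, where the analogue of the ``residual'' term $\cO_\Gamma$ will degenerate (have degree zero), and this is precisely what will upgrade the conclusion from plus-one generated to free. From \eqref{A-ltoA} with $h=d+1$ I first extract the short exact sequence
$$0 \to \cT_{\cA \setminus l}(-1) \to \TA \to \cO_l(-d) \to 0.$$

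The first substantive step is to observe that $h=d+1$ forces $l=l_0^{\cA}$. Indeed, in the proof of Proposition \ref{prop:h_val}, for strict POG (i.e., $d>b$) the value $h=d+1$ appears only in the case where the splitting type of $\TA$ on $l$ is $(-d,\,d+1-a-b)$, which by Corollary \ref{cor:l_0} identifies $l$ uniquely as $l_0^{\cA}$. This allows me to identify, up to scalar, the linear entry of the POG differential $\cO_{\p^2}(-d-1) \to \cO_{\p^2}(-b) \oplus \cO_{\p^2}(-d) \oplus \cO_{\p^2}(-a)$ with the defining equation of $l$. Securing this compatibility is the main obstacle of the proof; once it is in place, the remainder is purely diagrammatic.

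Next, I compare the POG resolution of $\TA$ with the standard resolution $0 \to \cO_{\p^2}(-d-1) \to \cO_{\p^2}(-d) \to \cO_l(-d) \to 0$ via a chain map whose left vertical is the identity on $\cO_{\p^2}(-d-1)$, whose middle vertical is the projection onto the $\cO_{\p^2}(-d)$ summand, and whose right vertical is the surjection $\TA \to \cO_l(-d)$ from \eqref{A-ltoA}. The left square commutes by the identification above, and the right square commutes tautologically. Applying the snake lemma, the kernels of the three verticals are $0$, $\cO_{\p^2}(-b) \oplus \cO_{\p^2}(-a)$, and $\cT_{\cA \setminus l}(-1)$ respectively, while all three cokernels vanish. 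The resulting six-term sequence collapses to an isomorphism
$$\cT_{\cA \setminus l}(-1) \;\cong\; \cO_{\p^2}(-b) \oplus \cO_{\p^2}(-a),$$
and twisting by $\cO_{\p^2}(1)$ gives $\cT_{\cA \setminus l} \cong \cO_{\p^2}(1-a) \oplus \cO_{\p^2}(1-b)$, which is precisely the freeness of $\cA \setminus l$ with exponents $(a-1,b-1)$.
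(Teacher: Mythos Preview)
Your proof follows the paper's approach exactly: both build the same comparison diagram between the POG resolution of $\TA$ and the Koszul resolution of $\cO_l(-d)$, and then apply the snake lemma. You are in fact more careful than the paper, which simply asserts the diagram commutes without comment.

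Two small points deserve attention. First, your identification $l=l_0^{\cA}$ via Corollary~\ref{cor:l_0} presupposes $d>b$; in the nearly free case $d=b$ that corollary is unavailable (the distinguished line is no longer unique), and one should instead lift the surjection of \eqref{A-ltoA} through the POG resolution by projectivity and note that the resulting middle vertical $\cO_{\p^2}(-b)^2\oplus\cO_{\p^2}(-a)\to\cO_{\p^2}(-b)$ is, after a change of basis, still a projection with kernel $\cO_{\p^2}(-a)\oplus\cO_{\p^2}(-b)$. Second, the right square does not commute ``tautologically'': having fixed the left and middle verticals, the induced map on cokernels is \emph{some} surjection $\TA\to\cO_l(-d)$, and you must still match it with the one coming from \eqref{A-ltoA}. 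This follows because, for $l=l_0^{\cA}$ and $d>b$, applying $\Hom(-,\cO_l(-d))$ to the POG resolution gives $\dim\Hom(\TA,\cO_l(-d))=1$. A cleaner route is to reverse the order: lift the surjection from \eqref{A-ltoA} by projectivity, observe that degree constraints force the middle vertical to be a nonzero scalar times the projection, and then the left square both forces $l=l_0^{\cA}$ and makes the left vertical an isomorphism---no separate appeal to the splitting type is needed.
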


\begin{proof}
  We have according to (\ref{AtoA-l}):
 $$\begin{CD}
     0 @>>> \cT_{\cA \setminus l}(-1)  @>>> \TA @>>>  \cO_l(-d) @>>>0
    \end{CD}$$
 and a commutative diagram:
 
   $$\begin{CD}
    @. 0@>>> \cO_{\p^2}(-a)\oplus \cO_{\p^2}(-b) @>>>\cT_{\cA \setminus l}(-1)   \\
   @.@VVV @VVV @VVV \\
     0@>>> \cO_{\p^2}(-d-1)@>>>  \cO_{\p^2}(-b)\oplus \cO_{\p^2}(-d)\oplus \cO_{\p^2}(-a)  @>>> \TA @>>>0\\
      @. @| @VVV @VVV \\
    0 @>>> \cO_{\p^2}(-d-1)  @>>> \cO_{\p^2}(-d) @>>>  \cO_l(-d) @>>>0
    \end{CD}$$ 
which implies using the snake lemma:
$$\begin{CD}
     0 @>>> \cO_{\p^2}(-a)\oplus \cO_{\p^2}(-b)   @= \cT_{\cA \setminus l}(-1).
    \end{CD}$$
\end{proof}

\begin{remark}
\label{rem:consequence}
Proposition \ref{prop:d+1} is also a consequence of \cite[Theorem 1.11]{A0}. 
\end{remark}

If $h  \leq a$ then $\cA \setminus l$ is not necessarily free or plus-one generated.

\begin{example}
\label{ex:cntr}
Let $\cA$ be the arrangement of equation $xyz(x+y)(x-y)(x+4y+z)(y+z)=0$. Then $\cA$ is a plus-one generated arrangement of exponents $(3,4)$ and level $5$. 

\noindent The line $l$ of equation $x+4y+z=0$ intersects generically the rest of the lines in the arrangement, so $h=6=d+1$ in this case. Then $\cA$ with this line deleted gives a free arrangement of exponents $(2,3)$, see for instance Proposition \ref{prop:d+1}.
 
\noindent If $l$ is one of the lines of equations $x-y=0, x+y=0, x=0$, we have $h=4=a+1$. If we delete from $\cA$ any  one of these lines we get, by Proposition \ref{prop:a+1}, a plus-one generated arrangement of exponents $(3,3)$ and level $4$.

\noindent For any of the lines of equations $z=0, y+z=0$ we have  $h=5=b+1$. If we delete from $\cA$ any of these lines we get, by Proposition \ref{prop:b+1}, a plus-one generated arrangement of exponents $(2,4)$ and level $4$.

\noindent For the line $l:y=0$ we have $h=3=a$. Then one easily checks (using for instance Macaulay2) that  the arrangement obtained by deleting the line $l$ from $\cA, \; \cA \setminus l$, is neither free nor plus-one generated, since its associated derivation module has $5$ generators.
\end{example}

\section{A geometric characterisation of NT-free arrangements of lines}

 Given an arrangement $\cA$, for each $H \in \cA$ one has an associated multiarrangement, the Ziegler restriction of $\cA$ onto $H$, as introduced  in \cite{Z}. In our context, where $\cA$ is a complex projective line arrangement, the Ziegler restrictions are multiarrangements in $\C^2$, and their associated graded module of derivations is always free, of rank $2$ (see for instance \cite{AD, Y} for details). The exponents of a Ziegler restriction are by definition the  pair of degrees of the generating set of derivations for this graded module. 
 
 Understanding exponents of Ziegler restrictions turned out to be an essential tool in the study of freeness of arrangements, as proved by Yoshinaga in \cite{Y}. Notably, the splitting type of the vector bundle associated to an arrangement $\cA$  onto a line $l \in \cA$ coincides to  the exponents of the Ziegler restriction of  $\cA$  onto $l$, again by \cite{Y}.\\

We have the following generalization of an addition-type formula from exponents of  Ziegler restrictions to splitting types.

\begin{prop}
\label{prop:add_split}
Let  $\cA, \;  \cB$ be two line arrangements such that $\cB = \cA \cup \{H\}$, for some line $H\subset \p^2 $.  Take $l \subset \p^2$ a line such that $l \notin \cB$ and 
denote by $(a^{\cA}, b^{\cA})$ the splitting type along the line $l$ for $\TA$ and by 
$(a^{\cB}, b^{\cB})$ the splitting type along the line $l$ for $\T_{\cB}$.
Then 
\begin{equation}
\label{eq:add}
 (a^{\cB}, b^{\cB}) \in \{(a^{\cA}+1, b^{\cA}), \; (a^{\cA}, b^{\cA}+1)\}
\end{equation} 
\end{prop}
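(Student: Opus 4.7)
The plan is to produce a short exact sequence on $l$ comparing $\T_\cB|_l$ to $\T_\cA|_l$, and then classify all rank-two sub-bundles of a split bundle on $\p^1$ whose quotient is a length-one skyscraper.

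For any arrangement and any of its lines $H$, the deletion sequence (the general form underlying \eqref{AtoA-l}) reads
$$0 \to \T_\cB \to \T_\cA \to \cO_H(-t_H) \to 0,$$
with $t_H = |\cB| - |H \cap \cB| - 1$; the precise value of the twist will not matter. Since $l \notin \cB$ we have $l \neq H$, so $l$ meets $H$ transversally in a single reduced point $p := l \cap H$. The Koszul resolution $0 \to \cO_{\p^2}(-1) \xrightarrow{\alpha_H} \cO_{\p^2} \to \cO_H \to 0$ stays exact after $\otimes \cO_l$ because $\alpha_H|_l$ is a nonzero section of $\cO_l(1)$; hence $\mathrm{Tor}_1(\cO_H(-t_H), \cO_l) = 0$ and $\cO_H(-t_H) \otimes \cO_l \cong \cO_p$. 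Restricting the displayed sequence to $l \simeq \p^1$ therefore yields
$$0 \to \T_\cB|_l \to \T_\cA|_l \to \cO_p \to 0.$$

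Write $\T_\cA|_l = \cO_l(-a^\cA) \oplus \cO_l(-b^\cA)$ with $a^\cA \leq b^\cA$ (in the positive-exponent convention used throughout the paper). The surjection onto $\cO_p$ is encoded by a nonzero pair $(\lambda, \mu) \in k^2$, the two evaluations at $p$. If $\lambda = 0$ or $\mu = 0$, the map factors through a single summand and the kernel is visibly $\cO_l(-a^\cA) \oplus \cO_l(-b^\cA - 1)$ or $\cO_l(-a^\cA - 1) \oplus \cO_l(-b^\cA)$ respectively. When both $\lambda, \mu$ are nonzero, project $\T_\cB|_l$ onto the second summand of $\T_\cA|_l$: this is surjective onto $\cO_l(-b^\cA)$ (given any $t \in \cO_l(-b^\cA)$ one can find $s \in \cO_l(-a^\cA)$ with $\lambda s(p) + \mu t(p) = 0$), and its kernel equals $\cO_l(-a^\cA - 1)$. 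The resulting extension
$$0 \to \cO_l(-a^\cA - 1) \to \T_\cB|_l \to \cO_l(-b^\cA) \to 0$$
splits because $\mathrm{Ext}^1(\cO_l(-b^\cA), \cO_l(-a^\cA - 1)) = H^1(\cO_l(b^\cA - a^\cA - 1)) = 0$ (as $b^\cA - a^\cA - 1 \geq -1$), giving $\T_\cB|_l \cong \cO_l(-a^\cA - 1) \oplus \cO_l(-b^\cA)$.

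In all cases the splitting type of $\T_\cB|_l$ belongs to $\{(a^\cA + 1, b^\cA), (a^\cA, b^\cA + 1)\}$, as claimed. The main obstacle is the both-nonzero case with $a^\cA < b^\cA$, where $\Hom(\cO_l(-b^\cA), \cO_l(-a^\cA)) = 0$ prevents a symmetric change-of-basis argument on $\T_\cA|_l$; the direct projection onto the second summand combined with the $H^1$-vanishing above sidesteps this difficulty.
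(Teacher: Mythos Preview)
Your proof is correct and follows the same route as the paper: restrict the deletion sequence $0\to\T_\cB\to\T_\cA\to\cO_H(-t)\to0$ to the transversal line $l$ to obtain a length-one skyscraper quotient, and read off the two possible kernels. The paper simply asserts the conclusion after writing down the restricted sequence, whereas you justify the Tor-vanishing needed for exactness of the restriction and carry out the kernel analysis explicitly (including the $\mathrm{Ext}^1$ computation to split the extension in the mixed case); these are welcome details, but the strategy is identical.
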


\begin{proof}
We always have the exact sequence:
$$\begin{CD}
0 @>>> \T_{\cA \cup \{H\}}  @>>> \TA  @>>> \cO_H(-t) @>>> 0 
\end{CD}$$
where $t$ is the number of triple points on $H$ of $\cA \cup \{H\}$.

Restricting to $\cO_l$ we obtain :
$$\begin{CD}
0 @>>> \T_{\cA \cup \{H\}} \otimes \cO_l  @>>> \TA  \otimes \cO_l  @>>> \cO_{p} @>>> 0 
\end{CD}$$

\noindent where  $p=l \cap H$. So, if 
$$ \TA  \otimes \cO_l = \cO_l(-a^{\cA}) \oplus \cO_l(-b^{\cA}),$$

\noindent then we necessarily have  
 $$ \cT_{\cB}  \otimes \cO_l = \cO_l(-a^{\cA}-1) \oplus \cO_l(-b^{\cA})$$ or  
 $$ \cT_{\cB}  \otimes \cO_l = \cO_l(-a^{\cA}) \oplus \cO_l(-b^{\cA}-1).$$
\end{proof}

\begin{remark}
\label{rem:add}
\begin{enumerate}
\item If we change the hypothesis of  Proposition \ref{prop:add_split} by assuming $l$ to be a line in $\cA$, then the splitting types are exponents of Ziegler restrictions, and the conclusion of the proposition still holds, see \cite{WY},  \cite{AN}.
\item For $l=H$, the claim of Proposition \ref{prop:add_split} no longer holds, as we can see in the following counterexample.
\end{enumerate}
\end{remark}

\begin{example}
\label{ex:counter}
Let $\cB$ be an arrangement with precisely two multiple points $P, Q$ of multiplicities $p+1$, respectively  $q+1$, with $p,q>2$, and only multiple points of multiplicity $2$ in rest, such that the line $l := PQ \in \cB$. Then $\cB$ is free of exponents $(p, q)$ and  $|\cB| = p+q+1$. 
 $\cA := \cB \setminus \{ l \}$ is plus-one generated with exponents $(p, q)$ and level $p+q-2$.  

The splitting type along the line $l$ for the vector bundle associated to the arrangement $\cB$ is  $(p, q)$ and the splitting type along the line $l$ for the vector bundle associated to the arrangement $\cA$ is $(1,p+q-2)$, hence an equality of type \eqref{eq:add} does not take place.
\end{example}

In particular, the previous example shows that any pair of positive integers can be realized as exponents of a plus-one generated arrangement of lines, compare to \cite{DS1} for similar results on nearly free arrangements.

Recall that, for $\cA$ plus-one generated of exponents  $(a,b)$ and level  $d>b$, there exists a unique line $l^{\cA}_0 \subset \p^2$ as in Corollary \ref{cor:l_0}.
 
 \begin{lemma}
 \label{lemma:+-}
 \begin{enumerate}
\item  Let $\cA$ be plus-one generated of exponents  $(a,b)$ and level  $d>b$, such that $l^{\cA}_0 \notin \cA$. Then $\cA$ cannot be  NT-free plus.
\item  Let $\cA$ be   plus-one generated of exponents  $(a,b)$ and level  $d>b$, such that $l^{\cA}_0 \in \cA$. Then $\cA$ cannot be  NT-free minus.
  \end{enumerate}
  \end{lemma}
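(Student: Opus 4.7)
The plan is to compare splitting types on the distinguished line $l_0^\cA$ with those on a generic line, using Proposition~\ref{prop:add_split} and its extension via Remark~\ref{rem:add}(1) to lines of the smaller arrangement, and to show that the resulting constraints on the exponents $(a_0,b_0)$ of the hypothetical adjacent free arrangement are incompatible with the standing hypothesis $d>b$.

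For part~(1), suppose for contradiction that $\cA=\cA_0\cup\{H\}$ with $\cA_0$ free of exponents $(a_0,b_0)$, $a_0\le b_0$. The hypothesis $l_0^\cA\notin\cA$ forces $l_0^\cA\ne H$ and $l_0^\cA\notin\cA_0$, so Proposition~\ref{prop:add_split} applies both at $l_0^\cA$ and at a generic line missing $\cA\cup\supp(Z)$. Using Theorem~\ref{thm:AIM4.4}(1) and~(3), this yields the two identities of unordered pairs
\[
\{a,b-1\}=\{a_0+1,b_0\}\ \text{or}\ \{a_0,b_0+1\},\qquad
\{a+b-d-1,d\}=\{a_0+1,b_0\}\ \text{or}\ \{a_0,b_0+1\}.
\]
A short finite case analysis, respecting $a_0\le b_0$ and the inequality $2d\ge a+b+2$ (from $d\ge b+1$ combined with $a\le b$), eliminates essentially all candidate pairs.

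For part~(2), the argument is dual. Suppose $\cA_0=\cA\cup\{H\}$ is free with $H\notin\cA$; then $l_0^\cA\in\cA$ gives $l_0^\cA\ne H$ and $l_0^\cA\in\cA_0$, so Remark~\ref{rem:add}(1) permits the splitting-type identity at $l=l_0^\cA$. Together with the generic line we obtain
\[
\{a_0,b_0\}\in\{\{a+1,b-1\},\{a,b\}\},\qquad
\{a_0,b_0\}\in\{\{a+b-d,d\},\{a+b-d-1,d+1\}\}.
\]
Intersecting the two candidate sets under $a_0\le b_0$ and $d>b$ forces $d\in\{a,b,a-1,b-1\}$ in every sub-case, contradicting $d\ge b+1\ge a+1$.

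The main obstacle is the low-level boundary regime $a=b$, $d=b+1$, where $2d=a+b+2$ is tight and two of the candidate bump pairs coincide at the level of unordered sets; the splitting analysis alone is inconclusive there. To close this case one supplements the above comparison with the deletion results of Propositions~\ref{prop:a+1}, \ref{prop:b+1}, and~\ref{prop:d+1}: these pin down the value $h_H=|H\cap\cA|$ (and, in part~(2), its analogue after addition), forcing the exponents of $\cA_0$ to be exactly $(a-1,b-1)$ in part~(1) and $(a,b)$ in part~(2). Testing these specific pairs against the splitting on $l_0^\cA$ gives $d=b$ or $d=b-1$ in every branch, contradicting $d>b$ and completing both parts.
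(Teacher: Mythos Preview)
Your two--line splitting comparison is a reasonable idea, and outside the boundary regime it does work. The difficulty is that the boundary case $a=b$, $d=b+1$ is a genuine gap that your proposed fix does not close. Concretely, in Part~(1) the intersection of your two candidate lists leaves exactly $\{a_0,b_0\}=\{a-2,a\}$ (not $\{a-1,b-1\}$), and in Part~(2) it leaves $\{a_0,b_0\}=\{a-1,a+1\}$ (not $\{a,b\}$). Propositions~\ref{prop:a+1}, \ref{prop:b+1}, \ref{prop:d+1} only describe $\cA\setminus l$ for $h\in\{a+1,\,b+1,\,d+1\}$; they say nothing when $h\le a$, so they cannot ``pin down $h_H$'' nor force the exponents you claim. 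In particular, nothing in your toolkit excludes a free deletion with exponents $(a-2,a)$ arising from some $h<a$, and testing $(a-1,b-1)$ against the $l_0^{\cA}$ splitting is irrelevant because that pair has already been eliminated by the splitting comparison itself in the boundary case.

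The paper's proof sidesteps all of this by invoking \cite[Thm.~1.11]{A0} directly. For Part~(1) that theorem gives $|\cA\cap H|=d+1$ at once; then \cite{WY} identifies the Ziegler restriction (equivalently the splitting type) on $H$ as $(a+b-d-1,d)$, and the uniqueness in Corollary~\ref{cor:l_0} forces $H=l_0^{\cA}\in\cA$, a contradiction. For Part~(2) the same theorem yields $\exp(\cA\cup H)=(a,b)$ outright, and then a single comparison of Ziegler exponents on $l_0^{\cA}$ (via Remark~\ref{rem:add}(1)) suffices. No generic--line argument and no case analysis is needed. Your approach is more elementary in spirit (it tries to avoid \cite{A0}), but to make it complete you would essentially have to reprove the relevant part of \cite[Thm.~1.11]{A0} to handle the boundary case, which defeats the purpose.
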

 
 \begin{proof}
 {\it Part (1)}
Assume the contrary, that $\cA$ is  NT-free plus.  That is, there is a line $H \in \cA$ such that $\cA \setminus \{H\}$ is free. Then, by  \cite[Thm. 1.11]{A0}, $|\cA \cap H| = d+1$. By \cite{WY} the exponents of the Ziegler restriction of $\cA$ onto $H$ are $(a+b-d-1, d)$, which coincides with the splitting type onto $H$  for the bundle of logarithmic vector fields associated to the arrangement $\cA$. But, since $l^{\cA}_0$ is unique with the property that the splitting type onto $l^{\cA}_0$  for the bundle of logarithmic vector fields associated to the arrangement equals  $(a+b-d-1, d)$,  from Corollary \ref{cor:l_0}, it follows that $l^{\cA}_0 = H$, so $l^{\cA}_0 \in \cA$, contradiction. \\

 {\it Part (2)}
Just as before, assume the contrary, that $\cA$ is  NT-free minus.  That is, assume there exists  a line $H \subset \p^2$ such that $\cB := \cA \cup \{H\}$ is free.  Then $exp(\cB) = (a,b)$. 
 Since the exponents of the Ziegler restriction of $\cA$ onto $l^{\cA}_0$ are $(a+b-d-1, d)$, it follows that the exponents of the Ziegler restriction of $\cB$ onto $l^{\cA}_0$  should be one of the two pairs $\{(a+b-d, d), (a+b-d-1, d+1)\}$ (see Remark  \ref{rem:add}(1)).  At the same time, since $\cB$ is free of exponents $(a,b)$, the exponents of the Ziegler restriction of $\cB$ onto $l^{\cA}_0$  should be equal to  $(a,b)$, but this implies $b \in \{d,d+1\}$, contradiction.
\end{proof}
  
\begin{theorem}
\label{thm:NTF-POG}
Let $\cA$ be plus-one generated of exponents $(a,b)$ and level $d>b$.
\begin{enumerate}
\item Assume  $l^{\cA}_0 \notin \cA$. Then the following are equivalent:
	\begin{enumerate}
	\item $\cA$ is NT-free.
	\item $\cA$ is NT-free minus.
	\item $d = |\cA| - |\cA \cap l^{\cA}_0|$. 
	\item $\cA  \cup \{l^{\cA}_0\}$ is free.
	\end{enumerate}  
\item 	Assume $l^{\cA}_0 \in \cA$. Then the following are equivalent:
	\begin{enumerate}
	\item $\cA$ is NT-free.
	\item $\cA$ is NT-free plus.
	\item $d +1  = |\cA \cap  l^{\cA}_0|$. 
	\item $\cA  \setminus \{l^{\cA}_0\}$ is free.
	\end{enumerate}  
\end{enumerate}	
\end{theorem}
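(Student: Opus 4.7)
The proof has four main implications to establish beyond the trivialities: $(a) \iff (b)$ in each part follows from Lemma \ref{lemma:+-}, and $(d) \Rightarrow (b)$ is immediate in both parts (with $l^{\cA}_0$ as witness), so it remains to establish $(b) \Rightarrow (d)$ and $(c) \iff (d)$ in each part.

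For part (1), I would prove $(b) \Rightarrow (d)$ by contradiction: if $\cB := \cA \cup \{H\}$ is free with $H \neq l^{\cA}_0$, then $l^{\cA}_0 \notin \cB$ and Proposition \ref{prop:add_split} applied to $l = l^{\cA}_0$ forces the splitting of $\T_{\cB}$ on $l^{\cA}_0$ to be an increment of $(d, a+b-d-1)$ from Corollary \ref{cor:l_0}, hence in $\{\{d+1, a+b-d-1\},\{d, a+b-d\}\}$; on a generic line, Theorem \ref{thm:AIM4.4}(1) and Proposition \ref{prop:add_split} force the exponents of $\cB$ to lie in $\{\{a+1,b-1\},\{a,b\}\}$. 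A finite case check using $d > b \ge a$, parallel to the proof of Lemma \ref{lemma:+-}, shows that no multiset $\{\alpha,\beta\}$ can satisfy both constraints.

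Next, for $(c) \Rightarrow (d)$ in part (1), I would work with the POG resolution directly. Writing the syzygy $\cO(-1-d) \to F_0 := \cO(-d) \oplus \cO(-b) \oplus \cO(-a)$ as multiplication by $(s_d, s_b, s_a)$, where $s_d$ is the linear form cutting out $l^{\cA}_0$, condition $(c)$ yields $t_0 = d$, so that the addition cokernel $\cO_{l^{\cA}_0}(-t_0)$ is exactly the restriction of projection onto the first summand of $F_0$. A snake-lemma chase with $F_0' := \ker(F_0 \to \cO_{l^{\cA}_0}(-d)) = \cO(-d-1) \oplus \cO(-b) \oplus \cO(-a)$ produces a resolution
\[
0 \longrightarrow \cO(-d-1) \xrightarrow{(1, s_b, s_a)} F_0' \longrightarrow \T_{\cB} \longrightarrow 0,
\]
and because the leading coordinate is the nonzero scalar $1$, the $\cO(-d-1)$ summand cancels, giving $\T_{\cB} \simeq \cO(-a) \oplus \cO(-b)$. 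For $(d) \Rightarrow (c)$, a Chern-class computation on the addition exact sequence, using $c_2(\T_{\cA}) = ab - a - b + 1 + d$ from the POG resolution, yields $|\cA \cap l^{\cA}_0| = a+b-d + \alpha\beta - ab$ where $(\alpha, \beta)$ are the exponents of $\cB$. The generic-line constraint gives $(c)$ immediately when $\{\alpha,\beta\} = \{a,b\}$; in the remaining case $\{a+1, b-1\}$, the map $F_0 \to \T_{\cA} \to \cO_{l^{\cA}_0}(-t_0)$ restricted to $l^{\cA}_0$ involves a section of $\cO_{l^{\cA}_0}(b-a-1)$ that must be nowhere vanishing for surjectivity, forcing $b = a+1$, where the two multisets coincide.

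Part (2) is largely parallel with addition and deletion interchanged: $(c) \Rightarrow (d)$ is exactly Proposition \ref{prop:d+1} with $l = l^{\cA}_0$ and $h_l = d+1$. For $(b) \Rightarrow (c)$, suppose $\cA \setminus H$ is free; the deletion analogue of the Chern-class computation, combined with Propositions \ref{prop:a+1}--\ref{prop:b+1} (which exclude $h_H \in \{a+1, b+1\}$) and the case analysis in Proposition \ref{prop:h_val} (which rules out the remaining spurious values of $h_H$ arising from the exponent pair $\{a, b-2\}$), forces $h_H = d+1$; Proposition \ref{prop:h_val}'s classification then shows that any $l \in \cA$ with $h_l = d+1$ must have splitting $(a+b-d-1, d)$, whence $H = l^{\cA}_0$ by Corollary \ref{cor:l_0}, yielding $(c)$. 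The main obstacle throughout is the elimination of the spurious generic splitting $\{a+1, b-1\}$ in part (1) (respectively $\{a, b-2\}$ in part (2)); this is handled by descending to the line $l^{\cA}_0$ and exploiting the specific structure imposed there by the POG resolution.
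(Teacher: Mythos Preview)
Your approach diverges significantly from the paper's. The paper leans on Abe's \cite[Thm.~1.11]{A0} as a black box: that result already gives (i) that if $\cB = \cA \cup \{H\}$ (respectively $\cA \setminus \{H\}$) is free then its exponents are exactly $(a,b)$ (respectively $(a-1,b-1)$), and (ii) the equivalence between freeness of $\cB$ and the numerical condition on $|\cA \cap H|$. With these in hand the paper only has to identify the witness $H$ with $l^{\cA}_0$, which is a short splitting-type contradiction entirely parallel to Lemma~\ref{lemma:+-}. You instead attempt to recover both the exponent determination and the numerical equivalences $(c)\iff(d)$ from scratch via the POG resolution and Chern-class bookkeeping.

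That route has a genuine gap. In your $(b)\Rightarrow(d)$ argument for part~(1), the two splitting constraints are
\[
\{\alpha,\beta\}\in\bigl\{\{a,b\},\{a+1,b-1\}\bigr\}
\quad\text{and}\quad
\{\alpha,\beta\}\in\bigl\{\{a+b-d,d\},\{a+b-d-1,d+1\}\bigr\},
\]
and these are \emph{not} always disjoint under $d>b\ge a$: when $a=b$ and $d=a+1$, the multiset $\{a-1,a+1\}$ lies in both families, so your ``finite case check'' does not give a contradiction. (One can repair this by observing from the POG resolution that $h^0(\T_{\cA}(a-1))=0$, whereas $\T_{\cB}=\cO_{\p^2}(1-a)\oplus\cO_{\p^2}(-a-1)\subset \T_{\cA}$ would force $h^0(\T_{\cB}(a-1))\ge 1$; but you did not supply this.) Your $(c)\Rightarrow(d)$ step is also underjustified: you claim the composite $\cO_{\p^2}(-d)\to\T_{\cA}\to\cO_{l^{\cA}_0}(-d)$ is ``exactly the restriction of projection onto the first summand'', but the second arrow comes from the addition exact sequence~\eqref{AtoA-l}, not from the resolution~\eqref{eq:res}, and the asserted compatibility (equivalently, surjectivity of that composite) needs an argument. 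The paper sidesteps both issues because \cite[Thm.~1.11]{A0} pins down $\exp(\cB)=(a,b)$ immediately and handles $(c)\iff(d)$ in one stroke.
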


\begin{proof}
{(1) \em Case $l^{\cA}_0 \notin \cA$}

Assume $\cA$ is NT-free. 
Since, by Lemma \ref{lemma:+-}(1), $\cA$ cannot be NT-free plus, it follows that $\cA$ is NT-free if and only if $\cA$ is NT-free minus (i.e. there exists a line $H \subset \p^2$ such that $\cB := \cA \cup \{H\}$ is free). 
From \cite[Thm. 1.11]{A0}, this holds if and only if  $d = |\cA| - |\cA \cap H|$. Moreover, $exp(\cB) = (a,b)$.

To conclude the proof, we only need to show that $H = l^{\cA}_0$.

Assume the contrary,  $H \neq l^{\cA}_0$.  Then $l^{\cA}_0 \notin \cB$. Denote by $(a ^{\cA}, b ^{\cA})$ the splitting type onto $ l^{\cA}_0$ for the vector bundle associated to $\cA$ and by $(a ^{\cB}, b ^{\cB})$ the splitting type onto $ l^{\cA}_0$ for the vector bundle associated to $\cB$.  By Proposition \ref{prop:add_split}, we have:
$$
(a ^{\cB}, b ^{\cB}) \in \{(a ^{\cA}+1, b ^{\cA}), \; (a^{\cA}, b ^{\cA}+1)\}.
$$
 But $(a^{\cB}, b^{\cB}) = (a,b)$ and $(a^{\cA}, b^{\cA}) = (a+b-d-1, d)$, so $b \in \{d,d+1\}$, contradiction. Then necessarily $H=l^{\cA}_0$.  

{(2) \em Case $l^{\cA}_0 \in \cA$}

Assume $\cA$ is NT-free. 
Since, by Lemma \ref{lemma:+-}(2), $\cA$ cannot be NT-free minus, it follows that $\cA$ is NT-free if and only if $\cA$ is  NT-free plus,  i.e. there exists a line $H$ such that $\cB := \cA \setminus \{H\}$ is free.  
According to  \cite[Thm. 1.11]{A0}, this latter claim holds if and only if $ |\cA^H| = d+1$, where $\cA^H$ is the restriction of $\cA$ to $H$. In this case, $\cB$ is free of exponents $(a-1, b-1)$. 

Assume   $H \neq l^{\cA}_0$. To conclude the proof, it is enough to show that this assumption leads to a contradiction. Notice that in this assumption $l^{\cA}_0 \in \cB$.
Since the exponents of the Ziegler restriction of $\cA$ onto $l^{\cA}_0$ are $(a+b-d-1, d)$, it follows that the exponents of the Ziegler restriction of $\cB$ onto $l^{\cA}_0$  should be either $(a+b-d-2, d)$ or $(a+b-d-1, d-1)$. But, since $\cB$ is free of exponents $(a-1, b-1)$,
we get $b \in \{d,d+1\}$, contradiction.
 \end{proof}
 
 \begin{remark}
 \label{rem:NNTF+-}
 If $\cA$ is a plus-one generated arrangement of exponents $(a,b)$ and level $d$, the condition $d>b$ ensures that $\cA$ cannot be simultaneously NT-free minus and NT-free plus (see Lemma \ref{lemma:+-}).
 But there exist plus-one generated arrangements with $b=d$ (i.e. nearly free) that are at the same time NT-free minus and NT-free plus. Take for instance the arrangement $\cA$  of equation $xyz(x+y)(y+z)(x+2y+z)=0$.  $\cA$ is plus-one generated of exponents $(3,3)$ and level $3$. $\cA \setminus \{x=0\}$ is free of exponents $(2,2)$, so $\cA$ is NT-free plus, and $\cA \cup \{y+\frac{1}{2}z = 0 \}$ is free of exponents $(3,3)$, so $\cA$ is also NT-free minus.
 \end{remark}
 
 {\bf Question}: Are there any examples of plus-one generated arrangements that satisfy one of the two conditions below? 
 
 1.  A plus-one generated arrangement $\cA$ of exponents $(a,b)$ and level $d>b$ with $l^{\cA}_0 \in \cA$ such that $|\cA^{ l^{\cA}_0}| \neq d+1$. Notice that this latter condition is equivalent to $|\cA^{ l^{\cA}_0}| < d+1$, by Proposition \ref{prop:h_val}.
 
 2.  A plus-one generated arrangement $\cA$ of exponents $(a,b)$ and level $d>b$ with $l^{\cA}_0 \notin \cA$ such that $ |\cA| - |\cA \cap l^{\cA}_0| \neq d$.

\end{document}